\theoremstyle{plain}
\newtheorem{theorem}{Theorem}
\newtheorem{lemma}{Lemma}
\newtheorem{corollary}{Corollary}
\newtheorem*{corollary*}{????????}
\theoremstyle{definition}
\newtheorem{definition}{Definition}
\theoremstyle{remark}
\newtheorem{remark}{Remark}
\newtheorem*{remark*}{Remark}
\begin{document}

\title[Cantor series expansions and packing dimension faithfulness]
{Cantor series expansions \\ and packing dimension faithfulness}
\author[Yu. Kondratiev, M.Lebid, O.Slutskyi, G. Torbin  ]{Yuri Kondratiev$^{1,4}$,
Mykola Lebid$^{2}$,\\ Oleksandr Slutskyi $^{3}$, Grygoriy Torbin$^{4,5}$}
\date{}
\maketitle
\begin{abstract}
The paper is devoted to the development of  general theory of packing measures and dimensions via introducing the notion of <<faithfulness of a packing family for $\dim_P$ calculation>> and the packing analogues of the Billingsley dimension. To this aim we study equivalent definitions of packing dimension and prove theorems which can be considered as  packing analogues of the famous Billingsley's theorems.  The main result of the paper gives necessary and sufficient condition for the packing dimension faithfulness of the family of cylinders generated by the Cantor series expansion. To the best of our knowledge this  is the first known sharp   condition of the packing dimension faithfulness for a class of packing families containing both faithful and non-faithful ones.
\end{abstract}

$^1$Fakult\"{a}t  f\"{u}r Mathematik, Universit\"{a}t Bielefeld, Postfach 10 01 31, D-33501,  Bielefeld (Germany); E-mail:
kondrat@uni-bielefeld.de

 $^2$ Fakult\"{a}t  f\"{u}r Mathematik, Universit\"{a}t Bielefeld, Postfach 10 01 31, D-33501,  Bielefeld (Germany); E-mail: mlebid@math.uni-bielefeld.de

$^{3}$~National Dragomanov Pedagogical University, Pyrogova str. 9, 01030 Kyiv(Ukraine); E-mail: slualexvas@gmail.com

$^{4}$~ National Dragomanov Pedagogical University, Pyrogova str. 9, 01030 Kyiv
(Ukraine)~$^{5}$Institute for Mathematics of NASU, Tereshchenkivs'ka str. 3, 01601 Kyiv (Ukraine); E-mail:
torbin@iam.uni-bonn.de (corresponding author)
\medskip

\textbf{AMS Subject Classifications (2010): 11K55,28A78,28A80,
60G30.}\medskip

\textbf{Key words:} fractals, packing dimension, Hausdorff dimension,  Cantor series expansion, Billingsley packing dimension,
 uncentered packing dimension.

\section{Introduction}
The Hausdorff dimension $\dim_H$ \cite{hausdorff_dim_definition} is the most famous fractal dimension. It is well known that the determination of this dimension is a rather non-trivial problem for many sets and measures (see, e.g., \cite{apt2004,apt2008,a_t_q_star, akpt2011,  falconer_fractal_geometry} and references therein).

The packing dimension $\dim_P$ can be considered as an alternative fractal dimension  \cite{tricot_two_definitions, falconer_fractal_geometry}. It has been introduced only in 1980-s but it is widely known and very useful in the study of fractal sets and measures. Let us stress several reasons for the <<popularity>> of packing dimension.
\begin{enumerate}
\item The packing dimension has all <<good>> properties of the fractal dimension, such as the countable stability (see, e.g., \cite{falconer_fractal_geometry}).
\item  <<The introduction of packing measures (remarkably some 60 years after Hausdorff measures) has led to a greater understanding
of the geometric measure theory of fractals, with packing measures behaving in a way that is ‘dual’ to Hausdorff measures in many respects>> (\cite{falconer_fractal_geometry}, P. 53).

 %%%%%Many formulae for packing dimension of sets and measures are similar to respective formulae of Hausdorff dimension ( <<inf>> for $\dim_H$ is replaced to <<sup>> for $\dim_P$, (see, e.g.,   \cite{chatterji_1, chatterji_2,li_measure});

\item Information about $\dim_H$ and $\dim_P$ reflects a level of <<regularity>> resp. <<irregularity>> of a set. Inequality $$\dim_H E \leq \dim_P E$$ is widely known (see, e.g., \cite{falconer_fractal_geometry}). If the inequality above becomes the equality, then the set E is said to be <<regular by Tricot>> \cite{tricot_packing_regularity} and it has many interesting properties (for example, $\dim_H(E\times F)=\dim_H E+\dim_H F$, where $E\times F$ is a Cartesian product of $E$ and $F$).
\end{enumerate}

So, the study of $\dim_P$ together with the $\dim_H$ allows us to know more about the geometric nature and regularity of sets and measures. That is why in many works (see, e.g.,
\cite{
anckar-dim-bounds-of-inv-mes-of-ifs,
attia-barral-hausd-and-pack-spectra-of-rand-walks,
fassler-orponen-restricted-families-of-projections,
holland-zhang-dim-results-for-inhomogenious-moran-sets,
jordan-rams-increasing-digit-subsystem,
joyce_equality_dimh_and_dimp}
and others)
the both dimensions are calculated for considered sets and measures.

There are many approaches to the Hausdorff dimension calculation. One of them is related to the notion of  <<faithfulness of a family of coverings for $\dim_H$ calculation>> (see, e.g., \cite{ailt} and references therein). Roughly speaking, a family $\Phi$ of subsets of the unit interval is faithful for $\dim_H$ calculation on the unit interval  if for any $E \subset [0,1]$ for the correct determination of $\dim_H E$ it is enough to consider coverings of $E$ by sets from $\Phi$. This approach makes the Hausdorff dimension calculation simpler in many cases. It is clear that any comparable net (\cite{R}) generates faithful family of coverings, but there exist faithful nets generating fractional measures which are essentially non-comparable w.r.t. classical Hausdorff measures (\cite{ailt}).

The aim of this paper is to develop general theory of packing measures and dimensions via introducing the notion of <<faithfulness of a packing family for $\dim_P$ calculation>> and the packing analogues of the Billingsley dimension. To this aim we study equivalent definitions of packing dimension and prove theorems which can be considered as  packing analogues of the famous Billingsley's theorems (\cite{B}).  The main result of the paper gives necessary and sufficient condition for the packing dimension faithfulness of the family of cylinders generated by the Cantor series expansion. To the best of our knowledge this  is the first known sharp   condition of the packing dimension faithfulness for a class of packing families containing both faithful and non-faithful ones.

%%%Unfortunately, a direct analogy with faithfulness for $\dim_H$ does not lead to applicable results. It will be shown below that this problem can be solved by using the <<uncentered packing dimension>> $\dim_{P(unc)}$ and proving that $$\dim_{P(unc)}(E)=\dim_P(E) \forall E \subset M$$ for some wide class of metric spaces M including $\mathbb{R}^n$. It is possible to introduce the notion <<faithfulness>> for $\dim_{P(unc)}$ and to obtain many applicable results.

\section{Basic definitions}
Let us shortly recall  main notions related to the Hausdorff and packing dimensions.

\subsection{Faithfulness w.r.t. $\dim_H$ calculation}

Let $\Phi$ be a fine family of coverigs on $[0;1]$, i.e., a family of subsets of $[0;1]$ such that for any $\varepsilon>0$ there exists an at most countable $\varepsilon$-covering $\{E_j\}$ of $[0;1]$ with $E_j\in\Phi$.

\begin{definition}
The $\alpha$-dimensional Hausdorff measure of a set $E\subset [0;1]$ w.r.t. a given fine family of coverings $\Phi$ is defined by
$$
H^\alpha(E,\Phi)=\lim_{\varepsilon\to0}
\left(
\inf
\left\{
\sum_j |E_j|^\alpha
\right\}
\right)=
\lim_{\varepsilon\to0} H^\alpha_\varepsilon(E,\Phi),
$$
where the infimum is taken over all at most countable $\varepsilon$-coverings $\{E_j\}$ of $E$, $E_j\in\Phi$.
\end{definition}

\begin{definition}
The Hausdorff dimension of a set E  w.r.t. $\Phi$ is define by
$$
\dim_H(E,\Phi):=\inf\{\alpha: H^\alpha(E,\Phi)=0\}.
$$
\end{definition}

\begin{definition}
A fine coverings family $\Phi$ is said to be faithful for the Hausdorff dimension calculation on $[0;1]$ if
$$
\dim_H(E,\Phi)=\dim_H(E), \forall E\subset [0;1].
$$
\end{definition}

A historical review of the notion of <<faithfulness for the Hausdorff dimension calculation>>  can be found in \cite{ailt}. In \cite{ailt} authors also obtained general necessary and sufficient conditions for the Hausdorff dimension faithfulness of Vitaly coverings and sharp conditions for the $\dim_H$ faithfulness  for the family of cylinders generated by Cantor series expansions.

\subsection{Faithfulness w.r.t. $\dim_P$  calculation}

The packing dimension $\dim_P$ was introduced by C. Tricot \cite{tricot_two_definitions} at the beginning of 1980 in the following way.

Let $E$ be a subset of a metric space $(M, \rho)$, let $|E|$ be  the diameter of a bounded set $E$.
% i. e., $$|E|:=\sup\{\rho(x, y): x, y\in E\}.$$

\begin{definition}
 Let $E\subset M$, $\varepsilon>0$. A finite or countable family $\{E_j\}$ of balls is called  an \textit{$\varepsilon$-packing} of a set $E$ if
\begin{enumerate}
\item $|E_i|\leqslant \varepsilon,\forall i$;
\item A center of any $E_i$ belongs to $E$;
\item $E_i \cap E_j = \varnothing, \forall~  i \neq j$.
\end{enumerate}
\end{definition}

\begin{remark}
The empty set of balls is also a packing of any set.
\end{remark}

\begin{definition}
\label{p-premeasure}
Let $E\subset M$, $\alpha\geqslant0$, $\varepsilon>0$. The \textit{$\alpha$-dimensional packing pre-measure} of bounded set $E$ is defined by
$$
\mathcal{P}^\alpha_{\varepsilon}(E):=\sup\left\{
\sum_i |E_i|^\alpha
\right\},
$$

\noindent where the supremum is taken over all $\varepsilon$-packings $\left\{ E_j \right\}$ of $E$ (if $\{E_j\} = \varnothing,$ then $\mathcal{P}^\alpha_{\varepsilon}(E):=0$).
\end{definition}

\begin{definition}
\label{p-quasimeasure}
\textit{The $\alpha$-dimensional packing quasi-measure} of a set $E$ is defined by
$$
\mathcal{P}^\alpha_{0}(E):=\lim_{\varepsilon\rightarrow 0} \mathcal{P}^\alpha_{\varepsilon}(E).
$$
\end{definition}
Unfortunately, the $\alpha$-dimensional packing quasi-measure is not a measure (to show this it is enough to consider any countable everywhere dense set).

\begin{definition}
\label{p-dimension}
\textit{The $\alpha$-dimensional packing measure} is defined by
$$
\mathcal{P}^\alpha(E):=\inf\left\{
\sum_j \mathcal{P}^\alpha_{0}(E_j): E\subset \bigcup E_j
\right\},
$$

\noindent where the infimum is taken over all at most countable
 coverings $\left\{ E_j \right\}$ of $E$, $E_j\subset \mathbf{M}$.

\end{definition}

\begin{definition}
The nonnegative number
$$
\dim_{P}(E):
=\inf\{\alpha: \mathcal{P}^\alpha(E)=0\}.
$$
\end{definition}

is called the \textit{packing dimension} of a set $E \subset W$.

To simplify the calculation of the packing dimension it is natural to introduce the notion of packing dimension  faithfulness for a countable family $\Phi$ of packings. Proving the packing analogue of the Billingsley's theorems is an additional  motivation to introduce the notion of $\dim_P(E,\Phi)$.

Unfortunately, a direct analogy with $\dim_H(E,\Phi)$ does not lead to applicable results. To explain this remark let us consider any countable family $\Phi$ of balls, and let
$$
\mathcal{P}^\alpha_\varepsilon(E,\Phi)=
\sup\left\{
\sum_i |E_i|^\alpha
\right\},
$$
where supremum is taken over all possible packings $\{E_i\}$ of  a set $E$ with $E_i \in \Phi$. Then we  define quasi-measure, measure and dimension by definitions \ref{p-premeasure}, \ref{p-quasimeasure} and \ref{p-dimension} respectively. In such a case every  family $\Phi$ is not faithful for the  packing dimension calculation on $[0;1]$. To prove this we consider the set  $C_\Phi$ of  centers of all balls from $\Phi$, and then define $E_0:=[0;1] \setminus C_\Phi$. It is clear that $\dim_P(E_0)=1$. On the other hand $\dim_P(E_0,\Phi)=0,$ because there are no packings of the set $E_0$ by balls from $\Phi$.

 Therefore, families of cylinders generated by  $s$-adic, $Q$, $Q^*$, $\tilde{Q}$-expansions can not be faithful for the classical packing dimension calculation.

 It is clear that the condition <<centers of balls is in the figure, dimension of which is calculating>> is  the main reason of this problem with     classical (centered) packing definition. That is why we introduce a new notion of <<uncentered packing>> and respectively <<uncentered packing dimension>> $\dim_{P(unc)}$. The $\dim_{P(unc)}$ definition is similar to $\dim_P$ definition, except that condition <<centers of balls in the packing is in the set $E$>> is replaced by <<every ball from the packing has a non-empty intersection with $E$>>.

Next we prove that
$$
\dim_{P(unc)} E=\dim_P E, \forall E
$$
in a wide class of metric spaces including $\mathbb{R}^n$.

Based on the notion of $\dim_{P(unc)}$, we introduce notions of $\dim_P(E,\Phi)$ and $\dim_P(E,\Phi,\mu)$, and the notion <<faithfulness of a  family of balls for packing (generally speaking, uncentered, but in $\mathbb{R}^n$ we drop this word) dimension calculation>>.
%%%%%
%%%%%In work \cite{slutskyi-dimp-preserv-by-rv-with-ind-tildeq-digits} it
%%%%%has been shown that cylinder families of $s$-adic, $Q$, $Q^*$ and $\tilde{Q}$-expansion are faithful (last two families are faithful under condition of respective coefficients separating from zero) for packing dimension calculation.
%%%%%
%%%%%The main result of this paper is the criteria of the Cantor series cylinders family faithfulnes for packing dimension calculation. Although statement of this result mathes with the statement of similar result for $\dim_H$, but proof is exceptionally different.
%%%%%
%%%%%The main difference is in set--counterexample selecting, which is necessary for proof of necessarity part of the criteria. Also we use the Billingsley theorem analog for the packing dimension (this analog using is possible due to uncentered packing dimension introducing). It is necessary to say that this analog was proved by M. Das \cite{das-billingsley-packing-dimension} in 2008, but we prove it in more general case (Das proved the theorem in the case of $\lim$, but we prove it in the case of $\limsup$).

\section{ Equivalent definitions and generalizations of packing dimension.}

\subsection{Uncentered packing dimension}
 The notions of <<Hausdorff dimension with respect to the family of covering>> and <<Billingsly dimension>> are well known generalizations of the classical Hausdorff dimension. They give a powerful tool for the determination and estimations of the Hausdorff dimension of sets and probability measures. As we explained above, the condition <<the centers of packing balls belong to set>> does not give a possibility to develop similar tools for the packing case. Because of this reason we  introduce a notion of uncentered packing dimension.

\begin{definition}
 Let $E\subset M$, $\varepsilon>0$. A finite or a countable family $\{E_j\}$ of open balls is called  an \textit{uncentered $\varepsilon$-packing} of a set $E$ if

\begin{enumerate}
\item $|E_i|\leqslant \varepsilon,\forall i$;
\item $E_i \cap E\neq\varnothing$;
\item $E_i \cap E_j = \varnothing~ \forall~i \neq j$.
\end{enumerate}
\end{definition}

\begin{remark}
The empty set of balls is also an uncentered packing of any set.
\end{remark}

\begin{definition}
Let $E\subset M$, $\alpha\geqslant0$, $\varepsilon>0$. The \textit{uncentered $\alpha$-dimensional packing pre-measure} of a bounded set $E$  is defined by
$$
\mathcal{P}^\alpha_{\varepsilon(unc)}(E):=\sup\left\{
\sum_i |E_i|^\alpha
\right\},
$$

\noindent where the supremum is taken over all at most countable
 uncentered $\varepsilon$-packings $\left\{ E_j \right\}$ of $E$ (if $\{E_j\} = \varnothing$, then $\mathcal{P}^\alpha_{\varepsilon (unc)}(E):=0$).

\end{definition}

Directly from the definition it follows that
\begin{enumerate}
\item \textbf{Monotonicity.} If $E_1 \subset E_2$, then $\mathcal{P}^\alpha_{\varepsilon(unc)}(E_1) \leqslant \mathcal{P}^\alpha_{\varepsilon(unc)}(E_2)$;
\item \textbf{Sub-additivity.} $\mathcal{P}^\alpha_{\varepsilon(unc)}(E_1 \cup E_2) \leqslant \mathcal{P}^\alpha_{\varepsilon(unc)}(E_1)+\mathcal{P}^\alpha_{\varepsilon(unc)}(E_2)$;
\item $\forall \delta>0:\mathcal{P}^{\alpha+\delta}_{\varepsilon(unc)}(E) \leqslant \mathcal{P}^\alpha_{\varepsilon(unc)}(E)\cdot \varepsilon^\delta$.
\end{enumerate}

\begin{definition}
\textit{The uncentered $\alpha$-dimensional packing quasi-measure} of a set $E$ is defined by

$$
\mathcal{P}^\alpha_{0(unc)}(E):=\lim_{\varepsilon\rightarrow0} \mathcal{P}^\alpha_{\varepsilon(unc)}(E).
$$
\end{definition}

Let us formulate basic properties of the uncentered $\alpha$-dimensional packing quasi-measure.
\begin{enumerate}
\item \textbf{Monotonicity.} If $E_1 \subset E_2$, then $\mathcal{P}^\alpha_{0(unc)}(E_1) \leqslant \mathcal{P}^\alpha_{0(unc)}(E_2)$;
\item \textbf{Sub-additivity.}$ \mathcal{P}^\alpha_{0(unc)}(E_1 \cup E_2) \leqslant \mathcal{P}^\alpha_{0(unc)}(E_1)+\mathcal{P}^\alpha_{0(unc)}(E_2)$;
\item \textbf{The set function $\mathcal{P}^\alpha_{0(unc)}(E)$ is not $\sigma$-additive.} There is a family of sets $E_1, E_2, \dots, E_k, \dots$ such that $\mathcal{P}^\alpha_{0(unc)}(\bigcup\limits_{i=1}^{\infty} E_i)>\sum\limits_{i=0}^{\infty} \mathcal{P}^\alpha_{0(unc)}(E_i)$;
\item If $\mathcal{P}^\alpha_{0(unc)}(E)<\infty$, then  $\mathcal{P}^{\alpha+\delta}_{0(unc)}(E)=0, ~~~ \forall \delta>0$;
\item If $\mathcal{P}^\alpha_{0(unc)}(E)>0$ for some positive $\alpha$, then  $\mathcal{P}^{\alpha-\delta}_{0(unc)}(E)=+\infty, ~~~ \forall \delta\in(0;\alpha). $
\end{enumerate}

\begin{definition}
\textit{The uncentered $\alpha$-dimensional packing measure} is defined by
$$
\mathcal{P}^\alpha_{(unc)}(E):=\inf\left\{
\sum_j \mathcal{P}^\alpha_{0(unc)}(E_j): E\subset \bigcup E_j
\right\},
$$

\noindent where the infimum is taken over all at most  countable uncentered
 coverings $\left\{ E_j \right\}$ of $E$, $E_j\subset \mathbf{M}$.

\end{definition}

%%%%\begin{remark}
%%%%If $(M,\rho)=\mathbb{R}^1$ and $\alpha=1$, then $\alpha$-dimensional packing measure is the same as Lebesgue measure.
%%%%\end{remark}

Let us formulate basic properties of the uncentered $\alpha$-dimensional packing measure.
\begin{enumerate}
\item \textbf{Monotonicity.} If $E_1 \subset E_2$, then $\mathcal{P}^\alpha_{(unc)}(E_1) \leqslant \mathcal{P}^\alpha_{(unc)}(E_2)$;
\item \textbf{$\sigma$-sub-additivity.}
$$
\mathcal{P}^\alpha_{(unc)}(\bigcup\limits_i E_i) \leqslant \sum\limits_i\mathcal{P}^\alpha_{(unc)}(E_i),~E_i \subset M,~ \forall i \in \mathbb{N}.
$$
\item If $\mathcal{P}^\alpha_{(unc)}(E)<\infty$, then  $\mathcal{P}^{\alpha+\delta}_{(unc)}(E)=0,  ~~~ \forall \delta>0$;

\item If $\mathcal{P}^\alpha_{(unc)}(E)>0$ for some  $\alpha>0$, then  $\mathcal{P}^{\alpha-\delta}_{(unc)}(E)=+\infty, ~~~ \forall \delta\in(0;\alpha)$.
\end{enumerate}

%%%%%\begin{remark}
%%%%%If $(M,\rho)=\mathbb{R}^1$ and $\alpha=1$, then uncentered $\alpha$-dimensional packing measure is the same as Lebesgue measure.
%%%%%\end{remark}

\begin{definition}
The nonnegative number
$$
\dim_{P(unc)}(E):
=\inf\{\alpha: \mathcal{P}^\alpha_{(unc)}(E)=0\}.
$$

is called the \textit{uncentered packing dimension} of a set $E \subset M$.

\end{definition}

By using standard approach one can easily prove basic properties of the uncentered packing dimension.
\begin{enumerate}
\item \textbf{Monotonicity.} If $E_1 \subset E_2$, then $\dim_{P(unc)}(E_1) \leqslant \dim_{P(unc)}(E_2)$;
\item \textbf{Countable stability.}
$$
\dim_{P(unc)}(\bigcup\limits_i E_i)=\sup\limits_i \dim_{P(unc)}(E_i),~ E_i \subset M,~ \forall i \in \mathbb{N} .
$$
\end{enumerate}

\begin{theorem}
Let $(M,\rho)$ be a metric space. Suppose that there exists a positive integer   $C$ such that any open ball $I$ contains at most $C$ non-intersecting open balls whose diameters are equal to $\frac{1}{8} |I|$. Then
$$
\dim_{P(unc)}(E)=\dim_P(E), ~~~ \forall E \subset M.
$$
\end{theorem}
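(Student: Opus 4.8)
The inequality $\dim_P(E)\le\dim_{P(unc)}(E)$ is immediate and does not need the hypothesis: every centered $\varepsilon$-packing is in particular an uncentered one (a ball with center in $E$ meets $E$), so $\mathcal P^\alpha_\varepsilon(E)\le\mathcal P^\alpha_{\varepsilon(unc)}(E)$ for all $\alpha,\varepsilon$; letting $\varepsilon\to0$ gives $\mathcal P^\alpha_0(E)\le\mathcal P^\alpha_{0(unc)}(E)$, and taking the covering infimum gives $\mathcal P^\alpha(E)\le\mathcal P^\alpha_{(unc)}(E)$, whence $\dim_P\le\dim_{P(unc)}$. So the entire content of the theorem is the reverse inequality $\dim_{P(unc)}(E)\le\dim_P(E)$, and the plan is to reduce this to a single-scale counting statement in which the hypothesis on $C$ applies cleanly.

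The central step I would isolate is a comparison of packing numbers at a \emph{fixed} scale. For a bounded $F\subset M$ and $\delta>0$, let $N^c_\delta(F)$ (resp.\ $N^u_\delta(F)$) be the largest number of pairwise disjoint open balls of diameter $\delta$ whose centers lie in $F$ (resp.\ which merely meet $F$). Trivially $N^c_\delta(F)\le N^u_\delta(F)$, and the lemma to prove is the reverse bound
\[
N^u_\delta(F)\le C\,N^c_{2\delta}(F).
\]
Given disjoint diameter-$\delta$ balls $\{E_i\}$ each meeting $F$, choose $x_i\in E_i\cap F$ and recenter, putting $B_i:=B(x_i,\delta)$ (a ball of diameter $2\delta$ centered in $F$). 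The crucial point is that, since all $E_i$ share the \emph{one} diameter $\delta$, the recentered family has bounded local complexity: if $B_i$ and $B_j$ meet then $d(x_i,x_j)<2\delta$, so $E_j\subset B(x_i,3\delta)=:I'$; the balls $E_j$ are disjoint and each has diameter $\delta\ge\frac18|I'|=\frac34\delta$, hence each contains a concentric ball of diameter $\frac18|I'|$, and by the hypothesis at most $C$ such balls fit in $I'$. Thus each $B_i$ meets at most $C$ of the others, the intersection graph of $\{B_i\}$ has degree $<C$, and a greedy coloring splits $\{B_i\}$ into at most $C$ pairwise disjoint subfamilies, each a centered $2\delta$-packing of $F$; comparing cardinalities yields the displayed bound.

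To pass from this to dimensions I would invoke the box-counting description of packing dimension (the ``equivalent definitions'' developed in the preceding section): $\dim_P$ equals the modified upper box dimension $\inf\{\sup_k\overline{\dim}_B F_k:\ E\subseteq\bigcup_k F_k\}$, with $\overline{\dim}_B F=\limsup_{\delta\to0}\frac{\log N^c_\delta(F)}{-\log\delta}$, and likewise $\dim_{P(unc)}$ equals the same infimum built from the uncentered upper box dimension $\overline{\dim}^{\,u}_B$ defined through $N^u_\delta$. The chain $N^c_\delta(F)\le N^u_\delta(F)\le C\,N^c_{2\delta}(F)$ shows, after taking logarithms and $\limsup$ (the additive $\log C$ and the harmless change of scale $\delta\mapsto2\delta$ both disappear in the limit), that $\overline{\dim}^{\,u}_B F=\overline{\dim}_B F$ for every bounded $F$. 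Since the two upper box dimensions agree set by set, the two modified-box infima agree, and $\dim_{P(unc)}(E)=\dim_P(E)$.

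The main obstacle is precisely the reduction to a single scale, and it is genuine rather than cosmetic: the naive strategy of recentering a \emph{multi-scale} packing $\{E_i\}$ at once and coloring fails, because recentered balls $B(x_i,t|E_i|)$ can have unbounded overlap even for small $t$ (disjoint balls shrinking geometrically toward a common boundary point each capture that point), so the multi-scale pre-measures cannot be compared with a uniform constant. The device that circumvents this is to route everything through the scale-uniform packing numbers $N_\delta$ and the box-counting characterization of $\dim_P$, where the hypothesis on $C$ enters only through the clean same-size estimate above; verifying (or citing from the equivalent-definitions section) that characterization for both the centered and the uncentered measures is the supporting ingredient on which the argument rests.
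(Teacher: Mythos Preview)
Your argument and the paper's share the same core single-scale observation: disjoint balls of (roughly) one diameter that merely meet $E$ can be recentered at points of $E$ and, using the hypothesis on $C$, thinned to a disjoint centered family with at most a factor-$C$ loss. The paper extracts this via a greedy independent-set selection; you phrase it as a graph-coloring bound $N^u_\delta(F)\le C\,N^c_{2\delta}(F)$. So far the two agree.

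The divergence is in how multi-scale packings are handled. The paper works entirely inside the pre-measure framework: given $t<s<\dim_{P(unc)}(E)$, it fixes an uncentered packing with $\sum_i|E_i|^s>1$, dyadically pigeonholes to find one scale $2^{-k_0}$ carrying at least $2^{k_0 t}(1-2^{t-s})$ balls, recenters and thins at that scale, and reads off $\mathcal P^t_0(E)\ge(1-2^{t-s})/C$; countable stability of $\dim_{P(unc)}$ then upgrades this to $\mathcal P^t(E)$.

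You instead route through the modified-upper-box characterization and its uncentered analogue. Two cautions. First, the paper's ``equivalent definitions'' section does not actually supply this characterization, centered or uncentered; the centered statement is standard (Tricot, Falconer), but the uncentered one is new here and must be proved. Second, the direction you need---that the critical exponent of $\mathcal P^\alpha_{0(unc)}(F)$ is at most $\overline{\dim}^{\,u}_B F$---is less routine than it looks: the single-scale numbers $N^u_\delta$ are not monotone in $\delta$ (shrinking a ball that meets $F$ may lose contact with $F$), so a multi-scale uncentered packing cannot be bounded scale-by-scale by $N^u$ directly. One has to dyadically decompose, at each scale invoke your recentering lemma to pass to $N^c$, and then sum---which is precisely the paper's computation in another wrapper. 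Your approach is thus a legitimate and cleanly modular reorganization, but the step you flag as ``verifying or citing'' is where the actual content lives; filling it in amounts to carrying out the paper's dyadic argument.
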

\begin{proof}

\textbf{Step 1.} Firstly let us prove that $\dim_{P(unc)}(E)\geqslant \dim_P(E)$.
From  definitions of pre-measures  and  properties of suprema it follows that
$$
\mathcal{P}^\alpha_{r(unc)}(A)\geqslant\mathcal{P}^\alpha_r(A), ~ \forall A \subset M, ~\forall \alpha>0, ~\forall r>0.
$$
Taking the limit, we get
$$
\mathcal{P}^\alpha_{0(unc)}(A)\geqslant\mathcal{P}^\alpha_0(A), ~ \forall A \subset M, ~\forall \alpha>0.
$$

So, for any  $ E \subset M$ and for any covering $\{E_j\}$ of $E$ we get

$$
\mathcal{P}^\alpha_{0(unc)}(E_j)\geqslant\mathcal{P}^\alpha_0(E_j), ~\forall \alpha>0.
$$

Hence
$$
\mathcal{P}^\alpha_{(unc)}(E)\geqslant\mathcal{P}^\alpha(E), ~ \forall E \subset M,  ~\forall \alpha>0.
$$

Let $\dim_{P(unc)}(E)=\alpha_0$.  By the definition of $\dim_{P(unc)}(E)$, we have
$$
\mathcal{P}^{\alpha_0+\varepsilon}_{(unc)}(E)=0,~~~ \forall\varepsilon>0.
$$
Therefore
$$
\mathcal{P}^{\alpha_0+\varepsilon}_0(E)=0,~~~ \forall\varepsilon>0,
$$
and consequently
$$
\dim_P(E)\leqslant \alpha_0.
$$
So, $\dim_{P(unc)}(E)\geqslant \dim_P(E)$.

\textbf{Step 2.} Let us show that  $\dim_{P(unc)}(E)\leqslant \dim_P(E)$.

If $\dim_{P(unc)}(E)=0$, then the statement  is obvious.

Let us work with the  case where $\dim_{P(unc)}(E)\neq 0$. Let us choose positive reals $t$ and $s$ such that $0<t<s<\dim_{P(unc)}(E)$.

Since
$
s<\dim_{P(unc)}(E),
$
 we have
$$
\mathcal{P}^s_{(unc)}(E)=+\infty,
$$
and, therefore,
$$\mathcal{P}^s_{0(unc)}(E)=+\infty.
$$

Hence,
$$
\forall r>0: \mathcal{P}^s_{r(unc)}(E)=+\infty.
$$
So, there exists an uncentered packing $V:=\{E_i\}$ of the set $E$, with
\begin{equation}
\label{eq:dimp_unc_equal_to_dimp_first}
\sum_i |E_i|^s>1.
\end{equation}
For any $r \in (0,1)$ let us split the packing $V$ into disjoint classes:
$$
V_k:=\left\{
E_i: 2^{-k-1}\leqslant |E_i| < 2^{-k}
\right\}, ~ k \in \{0,1,2,...\}.
$$
Let $n_k$ be the number of balls in  $V_k$. Let us prove that
$$
\exists k_0: n_{k_0} \geqslant 2^{k_0 t}(1-2^{t-s}).
$$

To obtain a contradiction, suppose that
$$
n_k < 2^{kt}(1-2^{t-s}), \forall k.
$$
Then
$$
\sum_i |E_i|^s<\sum_{k=0}^{\infty} 2^{-ks}\cdot n_k<\sum_{k=0}^{\infty} 2^{-ks} \cdot 2^{kt} (1-2^{t-s})=(1-2^{t-s}) \cdot \sum_{k=0}^{\infty} (2^{t-s})^k=1,
$$
which contradicts our assumption \eqref{eq:dimp_unc_equal_to_dimp_first}. So, there exists $k_0$ with $n_{k_0} \geqslant 2^{k_0 t}(1-2^{t-s}).$

Let us work with $V_{k_0}$. We denote by $A_1, A_2, \dots, A_{n_{k_0}}$ the balls in $V_{k_0}$, i. e.,
$$
V_{k_0}=\left\{
A_1, A_2, \dots, A_{n_{k_0}}
\right\}.
$$

Fix $r:=2^{-k_0-1}$. Then a radius of any $A_i$ is less then $r$.
Let $T_i$ be a point of $A_i$ such that  $T_i \in A_i \cap E$. Let $V'$ be the set of balls with the centers $T_i$, and the radius $r$, i.e.,
$$
V'=\{A'_i: A'_i=B(T_i,r)\}.
$$
Fix
$$
V^*=\{A^*_i: A^*_i=B(T_i,4r)\}.
$$

%%%%%\begin{center}
%%%%%\fbox{
%%%%%\includegraphics[width=0.7\linewidth]{unc-theorem.pdf}
%%%%%}
%%%%%\end{center}

Let us split the set $V'$ into families $K_1, K_2, \dots, K_l$ by using the following procedure.

\begin{enumerate}
\item Let $A'_{j_1}:=A'_1$ and let the family $K_1$ consists of $A'_{j_1}$ and all other balls $A'_{i} \in V'$ such that $A'_{i} \cap A'_{j_1} \neq \varnothing$.

\item We  choose an arbitrary ball $A'_{j_2} \in V'\setminus K_1$ and define  $K_2$ to be the family consisting of $A'_{j_2}$ and  all other balls $A'_{i} \in V'\setminus K_1$ such that $A'_{i} \cap  A'_{j_2} \neq \varnothing$.

\item And so on.

\item We will continue this process until $V'\setminus (K_1 \cup K_2 \cup ... \cup K_l) = \varnothing$. Since the number of elements in $V'$ is finite, the above mentioned number $l$ is not greater then $n_{k_0}$. \end{enumerate}

It is clear that if  $A'_i \bigcap A'_j \neq \emptyset$ (i.e., $\rho(T_i,T_j)\leqslant 2r$), then $A_j \subset A^*_i$.

A radius of $A_j$ is bigger then $r/2$. From the assumprion of the theorem it follows that there are not more then $C$ disjoint balls with radius $r/2$ in a ball with radius $4r$.   Therefore there are not more then $C$ balls in any family $K_i$. So, $\frac{n_{k_0}}{C}\leq l$.

From the construction of families $\{K_i\}$ it follows that if  $i<m$, then  balls $A'_{j_i}$ and  $A'_{j_m}$ do not intersect each other.

Therefore,
$$
V''=\left\{
A'_{j_1}, A'_{j_2}, \dots, A'_{j_l}
\right\}
$$
is a centered packing of a set $E$ and
$$
\sum_{i=1}^l |A'_{j_i}|^t
= l\cdot (2r)^t \geq  n_{k_0}\cdot\frac{2^{-k_0 t}}{C} \geqslant 2^{k_0 t}(1-2^{t-s})\cdot \frac{2^{-k_0 t}}{C} = \frac{1-2^{t-s}}{C}.
$$
Hence, $$
\mathcal{P}^t_{2^{-k_0}}(E)\geqslant \frac{1-2^{t-s}}{C}.
$$

By the inequality  $2^{-k_0}<r$, we get
$$
\mathcal{P}^t_{r}(E)\geqslant \frac{1-2^{t-s}}{C}, \forall r>0,
$$
and, therefore,
$$
\mathcal{P}^t_{0}(E)\geqslant \frac{1-2^{t-s}}{C}.
$$

Let us show that $\mathcal{P}^t(E)\geqslant\frac{1-2^{t-s}}{C}$. Let us recall that

$$
\mathcal{P}^t(E)=\inf\left\{
\sum_j \mathcal{P}^t_{0}(E_j): E\subset \bigcup E_j
\right\},
$$
where the infinitum is taken over all at most countable coverings $E_j$ of set $E$.

Let $\{E_j\}$ be an arbitrary at most countable coverings of $E$. Since $\dim_{P(unc)}(E)>s$, from the countable stability of uncentered packing dimension $\dim_{P(unc)}$ it follows that there exists a  $j_0$ such that  $\dim_{P(unc)}(E_{j_0})>s$. It is clear that in such a case we have
$$
\mathcal{P}^s_{(unc)}(E_{j_0})=+\infty,
$$
and, therefore,
$$
\mathcal{P}^s_{0(unc)}(E_{j_0})=+\infty
$$

Repeating the same arguments as we have already done in this proof for the set $E$,
we get
$$
\mathcal{P}^t_{0}(E_{j_0})\geqslant \frac{1-2^{t-s}}{C}.
$$
Therefore,
$$
\sum_j \mathcal{P}^t_{0}(E_j)\geqslant \frac{1-2^{t-s}}{C}.
$$

Since the latter inequality is true for an arbitrary covering $\{E_j\}$ of a set $E$, we conclude that
$$
\mathcal{P}^t(E)\geqslant \frac{1-2^{t-s}}{C}.
$$
So,
$$
\dim_P(E)\geqslant t.
$$
Since the real numbers  $t$ and $s$ can be chosen arbitrarily close to $\dim_{P(unc)}(E)$, we  get the desired inequality $\dim_P(E) \geqslant\dim_{P(unc)}(E)$, which completes the proof.
\end{proof}
\begin{corollary}
If $M=\mathbb{R}^n$, then $\dim_{P(unc)}(E)=\dim_P(E)$.
\end{corollary}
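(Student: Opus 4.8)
The plan is to deduce this corollary directly from the preceding Theorem, so the only work is to verify that $\mathbb{R}^n$ with the Euclidean metric satisfies the geometric hypothesis: namely, that there exists a positive integer $C$ such that every open ball $I$ contains at most $C$ pairwise non-intersecting open balls whose diameters equal $\tfrac18|I|$. Once such a $C$ is exhibited, the conclusion $\dim_{P(unc)}(E)=\dim_P(E)$ for every $E\subset\mathbb{R}^n$ follows at once by applying the Theorem.

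To produce $C$ I would use a standard volume-comparison argument. Let $I$ be an open ball of radius $R=\tfrac12|I|$, and let $B_1,\dots,B_N$ be pairwise disjoint open balls contained in $I$, each of diameter $\tfrac18|I|$, that is, of radius $r=\tfrac1{16}|I|$. Writing $\omega_n$ for the Lebesgue volume of the unit ball in $\mathbb{R}^n$, each $B_i$ has volume $\omega_n r^n$ while $I$ has volume $\omega_n R^n$. Since the balls $B_i$ are disjoint and all contained in $I$, the sum of their volumes cannot exceed the volume of $I$:
$$
N\,\omega_n r^n=\sum_{i=1}^N \mathrm{vol}(B_i)\le \mathrm{vol}(I)=\omega_n R^n .
$$
Dividing by $\omega_n r^n$ gives $N\le (R/r)^n$, and since $R/r=(\tfrac12)\big/(\tfrac1{16})=8$ we obtain $N\le 8^n$.

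Hence $C:=8^n$ works: every open ball in $\mathbb{R}^n$ contains at most $8^n$ pairwise disjoint open balls of one-eighth its diameter, which is precisely the hypothesis of the Theorem. Invoking the Theorem then yields $\dim_{P(unc)}(E)=\dim_P(E)$ for all $E\subset\mathbb{R}^n$, completing the argument. I do not expect any genuine obstacle here; the only point needing a little care is the bookkeeping of the radius ratio, since a diameter of $\tfrac18|I|$ corresponds to a radius ratio of $8$ rather than $16$, and it is this exponent that fixes the explicit value $C=8^n$.
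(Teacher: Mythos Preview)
Your proposal is correct and follows exactly the approach implicit in the paper: the corollary is stated there without a separate proof, as an immediate consequence of the preceding theorem, and you have simply supplied the routine verification (via a volume-comparison bound giving $C=8^n$) that $\mathbb{R}^n$ satisfies the theorem's geometric hypothesis. There is nothing to add or correct.
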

%%%\begin{proof}
%%%Let $B_{8r}$ be a ball with radius $8r$, let $B_r$ be a ball with radius $r$, and let $\lambda(\cdot)$ be $n$-dimensional Lebesgue measure. Then
%%%$$
%%%\lambda(B_{8r})=8^n\cdot \lambda(B_r).
%%%$$
%%%Therefore, there exist not more then  $C=8^n$ disjoint balls with radii $r$ in a ball with radius $8r$, which completes the proof.
%%%\end{proof}

\bigskip
\bigskip
\bigskip
\bigskip
\bigskip

\subsection{Packing dimension with respect to a family of sets}
Let $\Phi$ be a family of balls in a metric space $(M, \rho)$.

\begin{definition}
Let $E\subset M$, $\alpha\geqslant0$, $\varepsilon>0$. Then \textit{$\alpha$-dimensional packing pre-measure} of a bounded set $E$ with respect to  $\Phi$ is defined by

$$
\mathcal{P}^\alpha_{\varepsilon}(E,\Phi):=\sup\left\{
\sum_i |E_i|^\alpha
\right\},
$$

\noindent where the supremum is taken over all uncentered
 $\varepsilon$-packings $\left\{ E_i \right\} \subset \Phi$ of $E$ (if $\{E_i\} = \varnothing,$ then  $\mathcal{P}^\alpha_{\varepsilon}(E,\Phi):=0$).
\end{definition}

The following properties of the $\alpha$-dimensional packing pre-measure w. r. t. family $\Phi$ follow directly from the definition.
\begin{enumerate}
\item \textbf{Monotonicity.} If $E_1 \subset E_2$, then $\mathcal{P}^\alpha_{\varepsilon}(E_1,\Phi) \leqslant \mathcal{P}^\alpha_{\varepsilon}(E_2,\Phi)$;
\item \textbf{Sub-additivity.} $\mathcal{P}^\alpha_{\varepsilon}(E_1 \cup E_2,\Phi) \leqslant \mathcal{P}^\alpha_{\varepsilon}(E_1,\Phi)+\mathcal{P}^\alpha_{\varepsilon}(E_2,\Phi)$;
\item $\forall \delta>0:\mathcal{P}^{\alpha+\delta}_\varepsilon(E,\Phi) \leqslant \mathcal{P}^\alpha_\varepsilon(E,\Phi)\cdot \varepsilon^\delta$.
\end{enumerate}

\begin{definition}
\textit{The $\alpha$-dimensional packing quasi-measure} of a set $E$
 w. r. t. $\Phi$ is defined by
$$
\mathcal{P}^\alpha_{0}(E,\Phi):=\lim_{\varepsilon\rightarrow0} \mathcal{P}^\alpha_{\varepsilon}(E,\Phi).
$$
\end{definition}

Let us formulate basic properties of the $\alpha$-dimensional packing quasi-measure w. r. t. $\Phi$.
\begin{enumerate}
\item \textbf{Monotonicity.} If $E_1 \subset E_2$, then $\mathcal{P}^\alpha_{0}(E_1,\Phi) \leqslant \mathcal{P}^\alpha_{0}(E_2,\Phi)$;
\item \textbf{Sub-additivity.}$  \mathcal{P}^\alpha_{0}(E_1 \cup E_2,\Phi) \leqslant \mathcal{P}^\alpha_{0}(E_1,\Phi)+\mathcal{P}^\alpha_{0}(E_2,\Phi)$;
\item If $\mathcal{P}^\alpha_0(E,\Phi)<\infty$, then  $\mathcal{P}^{\alpha+\delta}_0(E,\Phi)=0, ~~$ $\forall \delta>0$;
\item If $\mathcal{P}^\alpha_0(E)>0$ and $\alpha>0$, then  $\mathcal{P}^{\alpha-\delta}_0(E,\Phi)=+\infty,~~~$ $\forall \delta\in(0;\alpha)$.
\end{enumerate}

\begin{definition}
\textit{The $\alpha$-dimensional packing measure} w. r. t. $\Phi$ is defined by
$$
\mathcal{P}^\alpha(E,\Phi):=\inf\left\{
\sum_j \mathcal{P}^\alpha_{0}(E_j,\Phi): E\subset \bigcup E_j
\right\},
$$

\noindent where the infimum is taken over all at most countable
 coverings $\left\{ E_j \right\}$ of $E$, $E_j\subset \mathbf{M}$.
\end{definition}

Let us formulate basic properties of the $\alpha$-dimensional packing measure w.r.t. $\Phi$.

\begin{enumerate}
\item \textbf{Monotonicity.}  If $E_1 \subset E_2$, then $\mathcal{P}^\alpha(E_1,\Phi) \leqslant \mathcal{P}^\alpha(E_2,\Phi)$;
\item \textbf{$\sigma$-Sub-additivity.}
$$
\mathcal{P}^\alpha(\cup_i E_i,\Phi) \leqslant \sum_i\mathcal{P}^\alpha(E_i,\Phi), ~E_i \subset W,~ \forall i \in \mathbb{N}..
$$
\item If $\mathcal{P}^\alpha(E,\Phi)<\infty$, then  $\mathcal{P}^{\alpha+\delta}(E,\Phi)=0, ~~~$  $\forall \delta>0$;
\item If $\mathcal{P}^\alpha(E,\Phi)>0$ and $\alpha>0$, then  $\mathcal{P}^{\alpha-\delta}(E,\Phi)=+\infty, ~~~$ $\forall \delta\in(0;\alpha)$.
\end{enumerate}

\begin{definition}
The nonnegative number
$$
\dim_{P}(E,\Phi):
=\inf\{\alpha: \mathcal{P}^\alpha(E,\Phi)=0\}
$$
is called the \textit{packing dimension} of a set $E \subset M$ w. r. t. $\Phi$.
\end{definition}

By using standard approach one can easily prove monotonicity and countable stability  of the packing dimension w.r.t. $\Phi$, i.e.,
$$
\dim_P(\bigcup\limits_i E_i,\Phi)=\sup\limits_i \dim_P(E_i,\Phi), ~E_i \subset M,~ \forall i \in \mathbb{N} .
$$

\begin{lemma}
\label{dimpfi_leq_dimp}
$$\dim_{P}(E,\Phi)\leqslant\dim_{P(unc)}(E).$$
\end{lemma}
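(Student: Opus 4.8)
The plan is to establish the inequality at the level of pre-measures and then propagate it through quasi-measures, measures, and finally dimensions, exactly as in Step~1 of the preceding theorem. The key observation is that every uncentered $\varepsilon$-packing of $E$ built from balls of $\Phi$ is in particular an uncentered $\varepsilon$-packing of $E$ by arbitrary balls of $M$. Hence the supremum defining $\mathcal{P}^\alpha_{\varepsilon}(E,\Phi)$ is taken over a subfamily of the packings defining $\mathcal{P}^\alpha_{\varepsilon(unc)}(E)$, so
$$
\mathcal{P}^\alpha_{\varepsilon}(E,\Phi)\leqslant\mathcal{P}^\alpha_{\varepsilon(unc)}(E),\qquad \forall E\subset M,\ \forall\alpha>0,\ \forall\varepsilon>0.
$$

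First I would pass to the limit as $\varepsilon\to0$ to obtain $\mathcal{P}^\alpha_{0}(E,\Phi)\leqslant\mathcal{P}^\alpha_{0(unc)}(E)$ for all $E$ and $\alpha$. Applying this to each member of an arbitrary at most countable covering $\{E_j\}$ of $E$ and summing gives $\sum_j\mathcal{P}^\alpha_{0}(E_j,\Phi)\leqslant\sum_j\mathcal{P}^\alpha_{0(unc)}(E_j)$; taking the infimum over all such coverings then yields $\mathcal{P}^\alpha(E,\Phi)\leqslant\mathcal{P}^\alpha_{(unc)}(E)$ for every $\alpha>0$.

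Finally, I would translate this into an inequality of dimensions. If $\mathcal{P}^\alpha_{(unc)}(E)=0$ then, by the measure inequality just obtained, $\mathcal{P}^\alpha(E,\Phi)=0$ as well; hence the set $\{\alpha:\mathcal{P}^\alpha_{(unc)}(E)=0\}$ is contained in $\{\alpha:\mathcal{P}^\alpha(E,\Phi)=0\}$. Since the infimum over a larger set is not larger, we conclude
$$
\dim_{P}(E,\Phi)=\inf\{\alpha:\mathcal{P}^\alpha(E,\Phi)=0\}\leqslant\inf\{\alpha:\mathcal{P}^\alpha_{(unc)}(E)=0\}=\dim_{P(unc)}(E),
$$
which is the claim. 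There is no genuine obstacle here: the whole argument is a monotonicity chain driven by the single inclusion ``$\Phi$-packings are uncentered packings'', and the only point requiring mild care is the bookkeeping when some quasi-measures are infinite, which does not affect the implication $\mathcal{P}^\alpha_{(unc)}(E)=0\Rightarrow\mathcal{P}^\alpha(E,\Phi)=0$.
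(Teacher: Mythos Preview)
Your argument is correct and follows exactly the paper's approach: observe that the $\Phi$-packings form a subfamily of all uncentered packings (the paper phrases this as $\Phi\subseteq\Phi_0$ with $\Phi_0$ the family of all open balls), deduce the pre-measure inequality, and propagate it to dimensions. The only difference is that you spell out the passage from pre-measures to measures to dimensions, whereas the paper compresses this into a single sentence.
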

\begin{proof}
Let $\Phi_0$ be a family of all open balls of $M$. Then $$\mathcal{P}^\alpha_{r(unc)}(E)=\mathcal{P}^\alpha_{r}(E,\Phi_0).$$

\noindent Since $\Phi\subseteq\Phi_0$, we see that
$$\mathcal{P}^\alpha_{r}(E,\Phi)\leqslant\mathcal{P}^\alpha_{r}(E,\Phi_0).$$

\noindent By the inequality for packing pre-measures, it follows that
$$\dim_{P}(E,\Phi)\leqslant\dim_{P(unc)}(E).$$
\end{proof}

\subsection{Packing dimension w. r. t. a family of sets and a measure}

Let $\Phi$ be a family of open balls in a metric space $(M, \rho)$ and let $\mu$ be  a continuous measure.

\begin{definition}
Let $E\subset M$, $\alpha\geqslant0$, $\varepsilon>0$. Then \textit{$\alpha$-dimensional packing pre-measure} of a bounded set $E$ with respect to  $\Phi$ and   $\mu$ is defined by
$$
\mathcal{P}^\alpha_{\varepsilon}(E,\Phi,\mu):=\sup\left\{
\sum_i (\mu(E_i))^\alpha\right\},
$$
\noindent where the supremum is taken over all at most countable (uncentered) $\varepsilon$-packings $\left\{ E_i \right\} \subset \Phi$ of $E$ (if $\{E_j\} = \varnothing$, then  $\mathcal{P}^\alpha_{\varepsilon}(E,\Phi,\mu):=0$).
\end{definition}

The following properties of the $\alpha$-dimensional packing pre-measure w. r. t. family $\Phi$  and measure $\mu$ follows directly from the definition.

\begin{enumerate}
\item \textbf{Monotonicity.} If $E_1 \subset E_2$, then $\mathcal{P}^\alpha_{\varepsilon}(E_1,\Phi,\mu) \leqslant \mathcal{P}^\alpha_{\varepsilon}(E_2,\Phi,\mu)$;
\item \textbf{Sub-additivity.}  $\mathcal{P}^\alpha_{\varepsilon}(E_1 \cup E_2,\Phi,\mu) \leqslant \mathcal{P}^\alpha_{\varepsilon}(E_1,\Phi,\mu)+\mathcal{P}^\alpha_{\varepsilon}(E_2,\Phi,\mu)$;
\item If for a given $\varepsilon>0$ there exists a $c(\varepsilon)>0$ such that $\mu(I)<c(\varepsilon)$ for any ball $I$ with $|I|<\varepsilon$, then
$$\mathcal{P}^{\alpha+\delta}_\varepsilon(E,\Phi,\mu) \leqslant \mathcal{P}^\alpha_\varepsilon(E,\Phi,\mu)\cdot (c(\varepsilon))^\delta, ~~~ \forall \delta>0.$$

\end{enumerate}

\begin{definition}
\textit{The $\alpha$-dimensional packing quasi-measure} of a set $E$
 w. r. t. $\Phi$ and $\mu$ is defined by
$$
\mathcal{P}^\alpha_{0}(E,\Phi,\mu):=\lim_{\varepsilon\rightarrow0} \mathcal{P}^\alpha_{\varepsilon}(E,\Phi,\mu).
$$
\end{definition}

Let us formulate basic properties of the $\alpha$-dimensional packing quasi-measure w. r. t. $\Phi$ and $\mu$.
\begin{enumerate}
\item \textbf{Monotonicity.} If $E_1 \subset E_2$, then $\mathcal{P}^\alpha_{0}(E_1,\Phi,\mu) \leqslant \mathcal{P}^\alpha_{0}(E_2,\Phi,\mu)$;

\item \textbf{Sub-additivity.}  $\mathcal{P}^\alpha_{0}(E_1 \cup E_2,\Phi,\mu) \leqslant \mathcal{P}^\alpha_{0}(E_1,\Phi,\mu)+\mathcal{P}^\alpha_{0}(E_2,\Phi,\mu)$;

\item If $\mathcal{P}^\alpha_0(E,\Phi,\mu)<\infty$ and $\lim\limits_{\varepsilon\to0} c(\varepsilon)=0$ (the function $c(\varepsilon)$ has been defined above)), then  $\mathcal{P}^{\alpha+\delta}_0(E,\Phi,\mu)=0, ~~~$ $\forall \delta>0$;
\item If $\mathcal{P}^\alpha_0(E)>0$ and $\lim\limits_{\varepsilon\to0} c(\varepsilon)=0$ and $\alpha>0$, then  $\mathcal{P}^{\alpha-\delta}_0(E,\Phi,\mu)=+\infty, ~~~$ $\forall \delta\in(0;\alpha)$.
\end{enumerate}

\begin{definition}
\textit{The $\alpha$-dimensional packing measure} w. r. t.  $\Phi$  and $\mu$ is defined by
$$
\mathcal{P}^\alpha(E,\Phi,\mu):=\inf\left\{
\sum_j \mathcal{P}^\alpha_{0}(E_j,\Phi,\mu): E\subset \bigcup E_j
\right\},
$$
\noindent where the infimum is taken over all at most countable
 coverings $\left\{ E_j \right\}$ of $E$, $E_j\subset \mathbf{M}$.
\end{definition}

\begin{enumerate}
\item \textbf{Monotonicity.}  If $E_1 \subset E_2$, then $\mathcal{P}^\alpha(E_1,\Phi,\mu) \leqslant \mathcal{P}^\alpha(E_2,\Phi,\mu)$;
\item \textbf{$\sigma$-Sub-additivity.}
$$
\mathcal{P}^\alpha(\cup_i E_i,\Phi,\mu) \leqslant \sum_i\mathcal{P}^\alpha(E_i,\Phi,\mu), ~E_i \subset W,~ \forall i \in \mathbb{N}.
$$
\item If $\lim\limits_{\varepsilon\to0} c(\varepsilon)=0$ and $\mathcal{P}^\alpha(E,\Phi,\mu)<\infty$, then  $\mathcal{P}^{\alpha+\delta}(E,\Phi,\mu)=0, ~~$ $\forall \delta>0$;

\item If $\mathcal{P}^\alpha(E,\Phi)<\infty$, then  $\mathcal{P}^{\alpha+\delta}(E,\Phi)=0,~~~$ $\forall \delta>0$;
\item If $\mathcal{P}^\alpha(E,\Phi)>0$ and $\alpha>0$, then  $\mathcal{P}^{\alpha-\delta}(E,\Phi)=+\infty, ~~~$ $\forall \delta\in(0;\alpha)$.
\end{enumerate}

\begin{definition}
The nonnegative number
$$
\dim_{P}(E,\Phi,\mu):
=\inf\{\alpha: \mathcal{P}^\alpha(E,\Phi,\mu)=0\}.
$$
is called the \textit{packing dimension} of a set $E \subset W$ w. r. t. $\Phi$ and a measure $\mu$.
\end{definition}

By using the same techniques as for the classical packing dimension one can prove monotonicity and countable stability of the packing dimension w.r.t. $\Phi$ and $\mu$, i.e., $$
\dim_P(\cup_i E_i,\Phi,\mu)=\sup_i \dim_P(E_i,\Phi,\mu), ~E_i \subset M,~ \forall i \in \mathbb{N}.
$$

\begin{remark}
If $M\subset \mathbb{R}^1$ and $\mu$ is a Lebesgue measure ($\mu=\lambda$), then $\dim_P(E,\Phi,\mu)=\dim_P(E,\Phi)$.
\end{remark}

\subsection{Analogue of Billingsley's theorem for the packing dimension}

There are many types of expansions of real numbers over finite as well as infinite alphabets (see, e.g., \cite{Schweiger, Galambos, ABPT, AKNT2,  AT, akpt2011, a_t_q_star} and references therein). Each expansion generates the corresponding procedure of partitions and the family of basic cylinders. For a given expansion of real numbers over an alphabet $A$
$$ 
x=\Delta_{\alpha_1(x) \alpha_2(x) \ldots \alpha_n(x) \ldots ...},  ~~~ \alpha_k(x) \in A,
$$ 
let  $\Delta_n(x)$ be the cylinder of n-th rank containing $x$.

\begin{theorem}
\label{billingsley_theorem_for_dimp}
  Let $\mu$ and  $\nu$ be continuous measures on $[0;1]$ and $\Delta_n(x)$ be cylinder of $n$-th rank containing a point $x \in [0;1]$. Fix $\delta>0$. Let
$$
E\subset \left\{
x:\limsup_{n\to\infty} \frac{\ln\mu(\Delta_n(x))}{\ln\nu(\Delta_n(x))}\leqslant \delta
\right\},
$$

then
$$
\dim_P(E,\Phi,\nu)\leqslant \delta\cdot\dim_P(E,\Phi,\mu).
$$
\end{theorem}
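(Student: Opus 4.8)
The plan is to run the classical Billingsley decomposition, adapted to the packing-with-respect-to-$(\Phi,\mu)$ setting, where $\Phi$ denotes the family of cylinders of the expansion. First I would rewrite the logarithmic hypothesis multiplicatively. Fix any $\gamma>\delta$. Since $\mu,\nu$ are continuous and the cylinders shrink, $\mu(\Delta_n(x)),\nu(\Delta_n(x))\to 0$, so both logarithms are eventually negative; dividing the eventual inequality $\frac{\ln\mu(\Delta_n(x))}{\ln\nu(\Delta_n(x))}<\gamma$ through by $\ln\nu(\Delta_n(x))<0$ (which reverses the sign) gives $\mu(\Delta_n(x))\geqslant\bigl(\nu(\Delta_n(x))\bigr)^{\gamma}$. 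Because this holds only for $n$ large depending on $x$, I introduce the uniform level sets
$$
E_m:=\Bigl\{x\in E:\ \mu(\Delta_n(x))\geqslant\bigl(\nu(\Delta_n(x))\bigr)^{\gamma}\ \text{ for all }n\geqslant m\Bigr\},\qquad m\in\mathbb{N},
$$
so that $E=\bigcup_m E_m$ and, by countable stability of $\dim_P(\cdot,\Phi,\nu)$, it suffices to bound $\dim_P(E_m,\Phi,\nu)$ for each fixed $m$.

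Second, I would prove the key quasi-measure comparison on subsets of a fixed $E_m$. Fix $m$ and $\alpha\geqslant 0$, put $\beta:=\gamma\alpha$, and choose $\varepsilon_m>0$ so small that every cylinder of diameter at most $\varepsilon_m$ has rank $\geqslant m$ (possible since only finitely many cylinders have rank $<m$, so their diameters are bounded below). Let $A\subseteq E_m$ and let $\{E_i\}\subset\Phi$ be any uncentered $\varepsilon$-packing of $A$ with $\varepsilon<\varepsilon_m$. Each $E_i$ is a cylinder meeting $A\subseteq E_m$, hence contains a point $x_i\in E_i\cap E_m$ and equals $\Delta_{n_i}(x_i)$ with $n_i\geqslant m$; the defining property of $E_m$ then yields $\nu(E_i)^{\gamma}\leqslant\mu(E_i)$, and raising to the power $\alpha\geqslant 0$ gives $\nu(E_i)^{\beta}\leqslant\mu(E_i)^{\alpha}$. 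Summing over $i$, the same packing satisfies $\sum_i\nu(E_i)^{\beta}\leqslant\sum_i\mu(E_i)^{\alpha}$; taking suprema over packings and letting $\varepsilon\to 0$ gives
$$
\mathcal{P}^{\beta}_{0}(A,\Phi,\nu)\leqslant\mathcal{P}^{\alpha}_{0}(A,\Phi,\mu),\qquad\forall A\subseteq E_m.
$$

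Third, I would upgrade this to measures, then to dimensions, and finally remove $\gamma$. For any at most countable covering $\{F_j\}$ of $E_m$, the sets $A_j:=F_j\cap E_m\subseteq E_m$ still cover $E_m$, and combining the quasi-measure comparison with monotonicity ($A_j\subseteq F_j$) gives $\sum_j\mathcal{P}^{\beta}_0(A_j,\Phi,\nu)\leqslant\sum_j\mathcal{P}^{\alpha}_0(F_j,\Phi,\mu)$; taking the infimum over coverings yields $\mathcal{P}^{\beta}(E_m,\Phi,\nu)\leqslant\mathcal{P}^{\alpha}(E_m,\Phi,\mu)$. Now fix $\alpha>\dim_P(E_m,\Phi,\mu)$; since $\mu$ is continuous the function $c(\varepsilon)$ tends to $0$, so the corresponding property of $\mathcal{P}^{\alpha}(\cdot,\Phi,\mu)$ forces $\mathcal{P}^{\alpha}(E_m,\Phi,\mu)=0$, whence $\mathcal{P}^{\gamma\alpha}(E_m,\Phi,\nu)=0$ and $\dim_P(E_m,\Phi,\nu)\leqslant\gamma\alpha$. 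Letting $\alpha\downarrow\dim_P(E_m,\Phi,\mu)\leqslant\dim_P(E,\Phi,\mu)$ gives $\dim_P(E_m,\Phi,\nu)\leqslant\gamma\,\dim_P(E,\Phi,\mu)$; taking the supremum over $m$ (countable stability) and then letting $\gamma\downarrow\delta$ completes the proof.

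The routine monotonicity and subadditivity bookkeeping is immediate from the properties listed above. The delicate points are two. First, the passage from the \emph{pointwise} $\limsup$ hypothesis to a \emph{uniform} estimate, handled by the decomposition $E=\bigcup_m E_m$ together with countable stability; this is where the $\gamma>\delta$ slack is spent. Second, the control of cylinder ranks inside an uncentered packing: one must ensure that every cylinder in a sufficiently fine packing has rank $\geqslant m$, so that the inequality defining $E_m$ is applicable. Here the \emph{uncentered} nature of the packing is exactly what makes the cylinder geometry cooperate — a cylinder that merely meets $E_m$ automatically contains a point of $E_m$ and is therefore one of the cylinders $\Delta_n(x)$ controlled by the hypothesis, which is the step where the specific structure of $\Phi$ enters and which I expect to be the main obstacle.
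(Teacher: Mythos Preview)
Your proof is correct and follows essentially the same route as the paper: introduce slack $\gamma>\delta$ (the paper writes $\delta+\varepsilon$), decompose $E$ into the level sets $E_m$ on which the multiplicative inequality $\mu(\Delta_n)\geqslant\nu(\Delta_n)^{\gamma}$ holds for all $n\geqslant m$, compare quasi-measures packing by packing, pass to measures and dimensions, and finish with countable stability plus $\gamma\downarrow\delta$. If anything, your write-up is slightly more careful than the paper's at the quasi-measure-to-measure step, where you explicitly intersect the covering sets with $E_m$ before applying the comparison; the paper records this passage with a bare ``Consequently''.
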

\begin{proof}
At first, we shall prove the theorem under more strong assumptions
\begin{equation}
\label{billingsley-theorem-assumption}
\exists n_0: ~~~ \frac{\ln\mu(\Delta_n(x))}{\ln\nu(\Delta_n(x))}\leqslant \delta, \forall x \in E, ~ \forall n>n_0.
\end{equation}
Then for any $n>n_0$ and for any positive $\alpha$ we have
%%%%%$$
%%%%%\frac
%%%%%{\ln{\mu(\Delta_n(x))}}
%%%%%{\ln{\nu(\Delta_n(x))}}
%%%%%\leqslant \delta.
%%%%%$$
%%%%%Hence
%%%%%$$
%%%%%\ln{\mu(\Delta_n(x))}
%%%%%\geqslant
%%%%%\delta*\ln{\nu(\Delta_n(x))}
%%%%%$$
%%%%%and
%%%%$$
%%%%\mu(\Delta_n(x))\geqslant\nu^\delta(\Delta_n(x)).
%%%%$$
%%%%andBy the above inequality, we get
$$
(\mu(\Delta_n(x)))^\alpha \geqslant (\nu(\Delta_n(x)))^{\alpha\delta}.
$$
Therefore
$$
\mathcal{P}^\alpha_\varepsilon(E,\Phi,\mu)
\geqslant
\mathcal{P}^{\alpha\delta}_\varepsilon(E,\Phi,\nu)
$$
for all small enough $\varepsilon$.

%$\varepsilon< d_{n_0}$, where $d_{n_0}=\max\limits_{x \in [0,1]} {\Delta_{n_0}(x)}$ is the diameter of the longest cylinder of the $n_0$-th rank.
Taking the limit as $\varepsilon\to0$, we have
$$
\mathcal{P}^\alpha_0(E,\Phi,\mu)
\geqslant
\mathcal{P}^{\alpha\delta}_0(E,\Phi,\nu).
$$
Consequently,
$$
\mathcal{P}^\alpha(E,\Phi,\mu)
\geqslant
\mathcal{P}^{\alpha\delta}(E,\Phi,\nu).
$$
Let $\alpha_0:=\dim_P(E,\Phi,\mu)$. Let $\alpha>\alpha_0$ be an arbitrary number. Then $\mathcal{P}^\alpha(E,\Phi,\mu)=0$ and $\mathcal{P}^{\alpha\delta}(E,\Phi,\nu)=0$. Hence $\dim_P(E,\Phi,\nu)\leqslant\alpha\delta$. Therefore,
$$
\dim_P(E,\Phi,\nu)\leqslant \delta\cdot\dim_P(E,\Phi,\mu).
$$

Let
$$
E\subset \left\{
x:\limsup_{n\to\infty} \frac{\ln\mu(\Delta_n(x))}{\ln\nu(\Delta_n(x))}\leqslant \delta
\right\}.
$$

Then
$$
\forall x\in E,\forall \varepsilon>0,\exists N(x,\varepsilon): ~ \frac{\ln\mu(\Delta_n(x))}{\ln\nu(\Delta_n(x))}\leqslant \delta+\varepsilon, ~~~ \forall n>N(x,\varepsilon).
$$
Let $N_0(x, \varepsilon)$ be the minimal number with this property.
For a given  $\varepsilon>0$, let
$$
E_m=\left\{
x: N_0(x,\varepsilon) \leqslant m
\right\},
$$
where $m \in \mathbb{N}$.

By the definition, we have
$$
E_m \subset \left\{
x: \frac{\ln\mu(\Delta_n(x))}{\ln\nu(\Delta_n(x))}\leqslant \delta+\varepsilon, \forall n>m
\right\}.
$$

Hence
$$
\dim_P(E_m,\Phi,\nu) \leqslant (\delta+\varepsilon)\cdot\dim_P(E_m,\Phi,\mu),\forall m\in\mathbb{N}.
$$

From the  countable stability it follows that
$$
\dim_P(E,\Phi,\nu) \leqslant (\delta+\varepsilon)\cdot\dim_P(E,\Phi,\mu).
$$
Since $\varepsilon$ can be chosen arbitrarily small, we have
$$
\dim_P(E,\Phi,\nu) \leqslant \delta\cdot\dim_P(E,\Phi,\mu),
$$
which proves the theorem.
\end{proof}

 It is necessary to mention that a simple version of this theorem has been  proven by M. Das \cite{das-billingsley-packing-dimension} in 2008.

\section{On faithfulness of a packing family for the packing dimension calculation}

\subsection{Sharp conditions for faithfulness of packing families generated by Cantor series expansions}

 Let us recall that for a given sequence $\left\{n_{k} \right\}_{k=1}^{\infty } $ with $n_{k} \in \mathbb{N} \backslash \{ 1\} ,\, k \in
\mathbb{N}$ the  expression of $x \in [0,1]$ in the following form
\[x=\sum_{k=1}^{\infty}\frac{\alpha_{k}}{n_{1} \cdot n_{2} \cdot \ldots \cdot n_{k}}=:\Delta_{\alpha_{1}\alpha_{2}...\alpha_{k}...},~\alpha_{k} \in \{0,~1,~...,~n_{k}-1\}\]
is said to be the Cantor series expansion of $x$.  These expansions, which have been initially studied by G. Cantor in 1869 (see., e.g. \cite{Cantor1869}), are natural generalizations of the classical $s$-adic expansion for reals. In \cite{Airey} authors mentioned that <<G. Cantor’s motivation to study the Cantor series expansions was to extend the well known proof of the irrationality of the number $e = \sum\limits_{n=0}^{\infty} \frac{1}{n!}$ to a larger class of numbers. Results along these lines may be found in the monograph of J. Galambos \cite{Galambos}>>. Cantor series expansions have been intensively studied from different points of view during last century (see, e.g., \cite{ManceDiss, Schweiger} and references therein). They can be used to get simple proofs of irrationality of some famous constants (see, e.g., \cite{drobot}). A lot of efforts were spent by many mathematicians to find sharp conditions for rationality resp. irrationality of real numbers in terms of the sequence $\{n_k\}$, but this problem is still open. A series of research papers related to the normality of real numbers in terms of Cantor series expansions and fractal properties of subsets of non-normal numbers have been published during last decade (see, e.g., \cite{PT, apt2004, APT UMZh 2005, Airey, Airey2} and references therein). To calculate the Hausdorff and packing dimension of sets defined in terms of Cantor series expansions it is extremely important to know whether the family of cylinders of the Cantor series expansion is faithful.

Let $\Phi_{k}$ be the family of the k-th rank closed intervals (cylinders) , i.e.,
$$\Phi_{k}:= \left\{ E:E=\Delta_{\alpha_{1}\alpha_{2}...\alpha_{k}}, ~\alpha_{i}\in \{0,...,n_{i}-1\}, ~i=1,~2,~...,~k \right\}$$
with
$$\Delta_{\alpha_{1}\alpha_{2}...\alpha_{k}}:=
\left\{x:x \in
\left[\sum \limits_{i=1}^{k}\frac{\alpha_{i}}{n_{1} n_{2} \ldots n_{i}}, \frac{1}{n_{1} n_{2} \ldots n_{k}}+\sum \limits_{i=1}^{k}\frac{\alpha_{i}}{n_{1} n_{2} \ldots n_{i}}
\right]\right\}.$$

Let $\Phi$ be the family of all possible closed intervals (cylinders), i.e.,

\[\Phi:=\left\{E:E=\Delta_{\alpha_{1}\alpha_{2}...\alpha_{k}}, ~k\in \mathbb{N}, ~\alpha_{i}\in\{0,...,n_{i}-1\}, ~i=1,~2,~...,~k\right\}.\]

In the paper \cite{ailt} authors found sharp conditions for the Hausdorff dimension faithfulness of the family $\Phi$.
The following theorem, being the main result of the paper,  gives necessary and sufficient condition for the packing dimension faithfulness of the family of cylinders generated by the Cantor series expansion. To the best of our knowledge this  is the first known sharp   condition of the packing dimension faithfulness for a class of packing families containing both faithful and non-faithful ones.

\begin{theorem}

The family $\Phi$ of Cantor coverings of the unit interval is faithful for the Packing dimension if and only if

\begin{equation} \label{eq:dost_umova_dov}
\mathop{\lim }\limits_{k\to \infty } \frac{\ln n_{k} }{\ln n_{1} \cdot n_{2} \cdot \ldots \cdot n_{k-1} } =0.
\end{equation}

\end{theorem}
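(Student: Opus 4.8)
The plan is to first reduce faithfulness to a single inequality and then translate everything into a comparison of two box-counting quantities. Throughout write $L_k:=n_1n_2\cdots n_k$, so that every cylinder $\Delta_{\alpha_1\ldots\alpha_k}\in\Phi_k$ has diameter $L_k^{-1}$, and observe that condition \eqref{eq:dost_umova_dov} is equivalent to $\frac{\ln L_k}{\ln L_{k-1}}\to1$, since $\frac{\ln n_k}{\ln L_{k-1}}=\frac{\ln L_k}{\ln L_{k-1}}-1$. By Lemma~\ref{dimpfi_leq_dimp} and the Corollary ($\dim_{P(unc)}=\dim_P$ on $\mathbb{R}^1$) one always has $\dim_P(E,\Phi)\le\dim_P(E)$, so faithfulness is equivalent to the reverse inequality $\dim_P(E)\le\dim_P(E,\Phi)$ holding for every $E\subset[0;1]$. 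I would record two standard facts adapted to our setting: the packing measure construction yields the modified box-counting description $\dim_P(E)=\inf\{\sup_j\overline{\dim}_B(A_j):E\subset\bigcup_jA_j\}$, and the same construction with $\mathcal{P}^\alpha_0(\cdot,\Phi)$ in place of $\mathcal{P}^\alpha_0(\cdot)$ gives $\dim_P(E,\Phi)=\inf\{\sup_j d_\Phi(A_j):E\subset\bigcup_jA_j\}$, where $\overline{\dim}_B$ is the usual upper box dimension and $d_\Phi(A):=\inf\{\alpha:\mathcal{P}^\alpha_0(A,\Phi)=0\}$. A short direct computation (bounding any cylinder packing by $\sum_{j\ge m}P_j(A)L_j^{-\alpha}$ from above and using a single full rank-$m$ layer from below, where $P_m(A)$ is the number of rank-$m$ cylinders meeting $A$) identifies $d_\Phi(A)=\limsup_m\frac{\ln P_m(A)}{\ln L_m}$.

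For sufficiency assume \eqref{eq:dost_umova_dov}. Since cylinder packings are admissible uncentered ball packings, $d_\Phi(A)\le\overline{\dim}_B(A)$ for every bounded $A$, and it remains to prove the reverse. Let $\widetilde N_\varepsilon(A)$ be the largest cardinality of an $\varepsilon$-separated subset of $A$; then $\overline{\dim}_B(A)=\limsup_{\varepsilon\to0}\frac{\ln\widetilde N_\varepsilon(A)}{\ln(1/\varepsilon)}$. The key elementary observation is that two points lying in one rank-$k$ cylinder are at distance $<L_k^{-1}$, so whenever $L_k^{-1}\le\varepsilon$ an $\varepsilon$-separated set meets each rank-$k$ cylinder at most once, giving $\widetilde N_\varepsilon(A)\le P_k(A)$. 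Choosing, for each small $\varepsilon$, the index $k$ with $L_k^{-1}\le\varepsilon<L_{k-1}^{-1}$ yields
\[
\frac{\ln\widetilde N_\varepsilon(A)}{\ln(1/\varepsilon)}\le\frac{\ln P_k(A)}{\ln L_{k-1}}=\frac{\ln P_k(A)}{\ln L_k}\cdot\frac{\ln L_k}{\ln L_{k-1}}.
\]
Letting $\varepsilon\to0$ (hence $k\to\infty$) and using $\frac{\ln L_k}{\ln L_{k-1}}\to1$ gives $\overline{\dim}_B(A)\le d_\Phi(A)$. Thus $\overline{\dim}_B=d_\Phi$ on all bounded sets, the two modified descriptions coincide, and $\dim_P(E)=\dim_P(E,\Phi)$ for every $E$.

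For necessity suppose \eqref{eq:dost_umova_dov} fails, so there are $c>0$ and indices $k_1<k_2<\cdots$ with $\ln n_{k_j}\ge c\,\ln L_{k_j-1}$; thinning this sequence I may also assume $\ln L_{k_j-1}\ge j\,\ln L_{k_{j-1}}$. Fix $\beta\in(0,c)$ and build a set $A$ by digit restriction: force $\alpha_i=0$ for $i\notin\{k_j\}$, and at each rank $k_j$ let $\alpha_{k_j}$ run over an arithmetic-progression-like set $D_j\subset\{0,\ldots,n_{k_j}-1\}$ of cardinality $m_j:=\lfloor L_{k_j-1}^{\beta}\rfloor\le n_{k_j}$ whose points are spread at spacing $\approx L_{k_j-1}^{-1}/m_j$ inside the hosting rank-$(k_j-1)$ cylinder. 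Writing $\Pi_j:=m_1\cdots m_j=P_{k_j}(A)$, the thinning gives $\ln\Pi_j=(1+o(1))\beta\ln L_{k_j-1}$, whence $d_\Phi(A)\le\limsup_j\frac{\ln\Pi_j}{\ln L_{k_j}}\le\frac{\beta}{1+c}$, so $\dim_P(A,\Phi)\le\frac{\beta}{1+c}$. On the other hand, at the intermediate scale $\varepsilon_j\approx L_{k_j-1}^{-(1+\beta)}$ equal to the spacing, the $\Pi_j$ selected cylinders form an (essentially) $\varepsilon_j$-separated family, so the uniform measure $\mu$ on $A$ (equidistributing mass among the $m_j$ chosen children at each rank $k_j$) satisfies $\mu(B(x,\varepsilon_j/2))\asymp\Pi_j^{-1}$, giving $\overline d_\mu(x):=\limsup_{r\to0}\frac{\ln\mu(B(x,r))}{\ln r}\ge\frac{\ln\Pi_j}{\ln(1/\varepsilon_j)}\to\frac{\beta}{1+\beta}$ for every $x\in A$. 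By the packing analogue of Billingsley's lemma (upper local dimension bounds packing dimension from below) $\dim_P(A)\ge\frac{\beta}{1+\beta}>\frac{\beta}{1+c}\ge\dim_P(A,\Phi)$, so $\Phi$ is not faithful.

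The genuinely delicate part is the necessity construction, and within it the lower bound $\dim_P(A)\ge\frac{\beta}{1+\beta}$: unlike an upper bound it cannot be read off a single packing and must survive the outer infimum over all countable covers in the definition of $\mathcal{P}^\alpha$. This is exactly why I pass to the auxiliary measure $\mu$ and invoke the packing mass-distribution principle, and why the sequence $(k_j)$ must be thinned so that $\ln\Pi_j$ is dominated by its last factor; otherwise the tail $\sum_{j'<j}\ln m_{j'}$ inflates $d_\Phi(A)$ and could close the gap when $c\le1$. The sufficiency direction, by contrast, is essentially forced once faithfulness is recast through $\overline{\dim}_B$ and $d_\Phi$: its whole content sits in the single inequality $\widetilde N_\varepsilon(A)\le P_k(A)$ together with $\ln L_k/\ln L_{k-1}\to1$.
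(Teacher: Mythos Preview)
Your proof is correct and takes a genuinely different route from the paper in both directions.

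\textbf{Sufficiency.} The paper works directly with packing pre-measures: given a centred $\varepsilon$-packing $\{E_j\}$ of $E$, it replaces each ball $E_j$ by a cylinder $\Delta_j\subset E_j$ of comparable size (losing at most a factor $2n_{k_j}$), and then uses \eqref{eq:dost_umova_dov} to absorb that loss into an $\varepsilon^\delta$-factor, obtaining $\mathcal{P}^\alpha(E,\Phi)\geqslant 2^{-\alpha}\mathcal{P}^{\alpha+\delta}(E)$. You instead pass through the modified upper-box description $\dim_P(E)=\inf\{\sup_j\overline{\dim}_B A_j\}$ and its $\Phi$-analogue with $d_\Phi(A)=\limsup_m\frac{\ln P_m(A)}{\ln L_m}$, reducing everything to the elementary inequality $\widetilde N_\varepsilon(A)\le P_k(A)$ together with $\ln L_k/\ln L_{k-1}\to1$. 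Your route is shorter and more conceptual; the paper's route stays closer to the raw definitions and yields an explicit measure inequality \eqref{eq:cantor_pack_mes_ineq} rather than only a dimension statement.

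\textbf{Necessity.} Both constructions are Cantor-type sets with digit restriction along a thinned subsequence $(k_j)$: the paper allows $[\sqrt{n_{k_s}}]$ equally spaced values at rank $k_s$, you allow $m_j=\lfloor L_{k_j-1}^\beta\rfloor$ values for a chosen $\beta<c$. The upper bound on $\dim_P(\cdot,\Phi)$ is obtained differently: the paper applies its Billingsley-type Theorem~\ref{billingsley_theorem_for_dimp} with the natural product measure $\mu_\xi$ versus Lebesgue measure, whereas you read it off directly from $d_\Phi(A)\le\frac{\beta}{1+c}$. For the lower bound on $\dim_P$, the paper exhibits explicit packings at the intermediate scale $[\sqrt{n_{k_s}}]/L_{k_s}$ and then passes from $\mathcal{P}^\alpha_0(T^*)=+\infty$ to $\mathcal{P}^\alpha(T^*)=+\infty$ by invoking that $T^*$ is closed and nowhere dense (a step that actually needs the compactness/Baire argument behind it); you instead use the packing mass-distribution principle via $\overline d_\mu(x)\ge\frac{\beta}{1+\beta}$ for the uniform measure $\mu$ on $A$, which handles the outer infimum over countable covers automatically. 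Your approach avoids the paper's internal machinery (Theorem~\ref{billingsley_theorem_for_dimp}) and is more self-contained; the paper's approach showcases that machinery and gives the specific constants $\frac{C}{2C+2}<\frac{C}{C+2}$ tied to the value $C$ of the $\limsup$.
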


\begin{proof} \textbf{Sufficiency.}  Let us show that  condition (\ref{eq:dost_umova_dov}) is sufficient for the faithfulness of $\Phi$ for the packing dimension calculation. Since the inequality $\dim_P(E,\Phi)\leqslant \dim_P(E)$ is true for an arbitrary covering family $\Phi$ and for a set $E \in [0,1]$, it is
sufficient to prove that
$$
\dim_P(E,\Phi) \geqslant \dim_P E.
$$

Let $\{E_j\}=(a_j;b_j)$ be an arbitrary centered $\varepsilon$-packing of a given set $E$ ($c_j:=\frac{a_j+b_j}{2} \in E$).
Then there exists a cylinder $\Delta_j:=\Delta(|E_j|) \in \Phi_{k_j}$ such that:

\begin{enumerate}
\item $\Delta_j \subset E_j$;
\item $c_j \in \Delta_j$;
\item $\forall \Delta \in \Phi_{k_{j}-1}: \Delta \not\subset E_j$.
\end{enumerate}

% Then
%\begin{equation}
%\label{eq:delta_leq_E_leq_2ndelta}
%\left|
%\Delta_{\alpha_1(c_j) \alpha_2 (c_j) \dots \alpha_{k_j}(c_j)}
%\right|
%\leqslant |E_j| \leqslant
%2\cdot n_{k_j}\cdot
%\left|
%\Delta_{\alpha_1(c_j) \alpha_2 (c_j) \dots \alpha_{k_j}(c_j)}
%\right|.
%\end{equation}

\noindent From the above it follows that $|\Delta_j| \geqslant \frac{1}{2n_{k_j}}\cdot |E_j|$.
Consequently, the coresponding $\alpha$-volume of $\varepsilon$-packing $\{\Delta_j\} \subset \Phi$ is bounded from below:
$$
\begin{aligned}
\sum_j|\Delta_j|^\alpha & \geqslant
\sum_j \left(\frac{1}{2n_{k_j}}\cdot |E_j|\right)^\alpha=
\sum_j \left(\frac{1}{2 n_{k_j}}\right)^\alpha \cdot |E_j|^{-\delta}\cdot |E_j|^{\alpha+\delta}\geqslant\\
& \sum_j \left(\frac{1}{2 n_{k_j}}\right)^\alpha \cdot |\Delta_j|^{-\delta}\cdot |E_j|^{\alpha+\delta}=
\sum_j \left(\frac{1}{2n_{k_j}}\right)^\alpha \cdot (n_1 n_2 \dots n_{k_j})^\delta \cdot |E_j|^{\alpha+\delta}
\end{aligned}
$$

Since $$\lim\limits_{i\to\infty}\frac{\ln n_i}{\ln(n_1 n_2 \dots n_{i-1})}=0,$$
 we have $\lim\limits_{i\to\infty}\frac{\ln n_i}{\ln(n_1 n_2 \dots n_i)}=0$, and, therefore,  $\lim\limits_{i\to\infty}\frac{\ln(n_1 n_2 \dots n_i)}{\ln n_i}=+\infty$.

 So,  for given $\alpha>0$, $\delta>0$, $\varepsilon>0$ there exists $m_0\in\mathbb{N}$ such that

\begin{enumerate}
\item $\frac{1}{n_1 n_2 \dots n_i}<\varepsilon, \forall i>m_0$;
\item $\frac{\delta \cdot \ln(n_1 n_2 \dots n_i)}{\alpha \cdot \ln n_i}>1, \forall i>m_0.$
\end{enumerate}

\noindent From what has already
been proved, it follows that
$$
\begin{aligned}
\sum_j|\Delta_j|^\alpha & \geqslant
\sum_j \frac{1}{2^\alpha} \cdot \left(e^{\frac{\delta \ln\left(n_1 n_2 \dots n_{k_j}\right)}{\alpha \ln(n_{k_j})}-1}\right)^{\alpha \ln(n_{k_{j}})}
\cdot |E_j|^{\alpha+\delta}\geqslant \frac{1}{2^\alpha} \sum_j |E_j|^{\alpha+\delta},
\end{aligned}
$$
%$\forall \alpha>0, \forall \delta>0,
%\forall \varepsilon>0$.
%\frac{1}{n_1 n_2 \dots n_{m_0(\delta, \alpha)}}
\noindent Consequently,
$$
\mathcal{P}^\alpha_\varepsilon(E,\Phi) \geqslant \frac{1}{2^\alpha} \sum_j |E_j|^{\alpha+\delta}, \forall \alpha>0, \forall \delta>0,
\forall \varepsilon>0
$$
%\frac{1}{n_1 n_2 \dots n_{m_0(\delta, \alpha)}}
\noindent and
\begin{equation}
\label{eq:cantor_pack_mes_ineq}
\mathcal{P}^\alpha(E)\geqslant \mathcal{P}^\alpha(E,\Phi) \geqslant \frac{1}{2^\alpha} \mathcal{P}^{\alpha+\delta}(E).
\end{equation}
Let $\alpha_0:=\dim_P(E,\Phi)$. Then
$$
\mathcal{P}^{\alpha+\frac1n}(E,\Phi)=0, \forall n\in\mathbb{N}.
$$
Since $\mathcal{P}^{\alpha+\delta}(E) \leqslant 2^\alpha \mathcal{P}^\alpha(E,\Phi)$,
we get
$$
\mathcal{P}^{\alpha+\delta+\frac1n}(E)=0, \forall n\in\mathbb{N}, \forall \delta>0.
$$
So,
$$
\dim_P E \leqslant \alpha_0+\delta+\frac1n, \forall n\in\mathbb{N}, \forall \delta>0
$$
and $\dim_P E \leqslant \dim_P(E,\Phi)$.

\textbf{Necessity.}
Let us show that  condition (\ref{eq:dost_umova_dov}) is necessary for the faithfulness of $\Phi$ for the packing dimension calculation.

Suppose, contrary to our claim,  that $\Phi$ is faithful and  condition (\ref{eq:dost_umova_dov}) does not hold. Then
\begin{equation} \label{eq:upper_limit_c}
\mathop{\overline{\lim }}\limits_{k\to \infty } \frac{\ln n_{k} }{\ln n_{1} \cdot n_{2} \cdot \ldots \cdot n_{k-1} } =:C>0.
\end{equation}

%%%\noindent From (\ref{eq:upper_limit_c}) it follows that there exists a subsequence $\{ k_{i} \}_{i \in \mathbb{N}}$ such that $\forall \delta \in (0,C),~ \exists N_{0} (\delta ),$ $\forall k_{i}>N_{0} (\delta)$:

From (\ref{eq:upper_limit_c}) it follows that there exists an increasing subsequence $\{k_s\}$ such that
\begin{equation}
\label{eq:first_condition_for_ks}
\lim_{s\to\infty}\frac{\ln n_{k_s}}{\ln(n_1 n_2 \dots n_{k_s-1})}=C,
\end{equation}

and

\begin{equation}
\label{eq:second_condition_for_ks}
\lim_{s\to\infty}\frac{\ln(n_{k_1} n_{k_2} \dots n_{k_{s-1}})}{\ln(n_1 n_2 \dots n_{k_s-1})}=0.
\end{equation}

Let us construct a set $T^*$ such that
\begin{equation}
\label{eq:ineq_for_t_star}
\dim_P(T^*,\Phi)<\dim_P(T^*).
\end{equation}
Let $A=\{k_1,k_2,\dots,k_s,\dots\}$ and $$T^*=\left\{x: x=\Delta_{\alpha_1 \alpha_2 \dots \alpha_k\dots}\right\},$$ where $\alpha_j=0$ if $j\notin A$,

and $\alpha_j \in \left\{0,[\sqrt{n_j}], 2[\sqrt{n_j}], \dots, ([\sqrt{n_j}]-1)\cdot[\sqrt{n_j}]\right\}$ if $j\in A$.

Firstly let us show that $\dim_P(T^*)\geqslant \frac{C}{C+2}$.

To this aim  for a given  $\varepsilon>0$ let us choose $m_0$ such that $\frac{1}{n_1 n_2 \dots n_{m_0}}<\varepsilon$.

It is clear that for an arbitrary $s>m_{0}$ the set $T^*$ can be packed by
$$
\left[\sqrt{n_{k_1}}\right]\cdot
\left[\sqrt{n_{k_2}}\right]\cdot
\dots \cdot
\left[\sqrt{n_{k_s}}\right]=:Q_s
$$
intervals and each of them is a union of $[\sqrt{n_k}]$ cylinders from $\Phi_{k_{s}}$.

A length of each interval equals
$$
\frac{\left[\sqrt{n_{k_s}}\right]}{n_1 n_2 \dots n_{k_s}}=:V_s.
$$
The $\alpha$-volume of this $\varepsilon$-packing is equal to
$$
Q_s\cdot (V_s)^\alpha=\\ \left(
\exp\left(
\frac{\ln Q_s}{\ln\left(n_1 n_2 \dots n_{k_s-1}\right)}+
\frac{\alpha\ln\left[\sqrt{n_{k_s}}\right]}{\ln\left(n_1 n_2 \dots n_{k_s-1}\right)}-
\frac{\alpha\ln\left(n_1 n_2 \dots n_{k_s}\right)}{\ln\left(n_1 n_2 \dots n_{k_s-1}\right)}
\right)
\right)^{\ln\left(n_1 n_2 \dots n_{k_s-1}\right)} .
$$
Let us calculate
$$
\lim_{s\to\infty} \frac{\ln Q_s}{\ln\left(n_1 n_2 \dots n_{k_s-1}\right)}=
\lim_{s\to\infty} \frac
{\ln\left[\sqrt{n_{k_1}}\right]+\ln\left[\sqrt{n_{k_2}}\right]+\dots\ln\left[\sqrt{n_{k_{s-1}}}\right]+\ln\left[\sqrt{n_k}\right]}
{\ln\left(n_1 n_2 \dots n_{k_s-1}\right)}=\frac{C}{2}.
$$
Consequently,
$$
\lim_{s\to\infty} Q_s \cdot (V_s)^\alpha=\left(exp(C-2\alpha-\alpha C)\right)^{\frac{1}{2}\ln\left(n_1 n_2 \dots n_{k_s-1}\right)}
$$
If $C-2\alpha -\alpha C>0$ ($\alpha>\frac{C}{2+C}$), then $Q_s (V_s)^\alpha\to+\infty$ as $s\to\infty$. Therefore, if $\alpha<\frac{C}{2+C}$, then  $\mathcal{P}^\alpha_\varepsilon(T^*)=+\infty$, $\forall \varepsilon>0$. Since $T^*$ is a nowhere dense closed set, we have  $\mathcal{P}^\alpha(T^*)=+\infty$, $\forall \alpha<\frac{C}{2+C}$. Therefore, we have
\begin{equation}
\label{eq:dimp_of_T_star}
\dim_P T^*\geqslant \frac{C}{2+C}.
\end{equation}

On the other hand we shall prove that
$$
\dim_P(T^*,\Phi)\leqslant \frac{C}{2C+2}.
$$
Let $\mu_\xi$ be the probability measure corresponding to the random variable $\xi$ with independent digits of the Cantor series expansion, i.e.,
$$
\xi =\sum_{k=1}^{\infty }\frac{\xi_{k}}{n_{1} n_{2} \ldots n_{k}},
$$

where $\xi _{k} $ are independent random variables such that

$\xi_k$ takes the value $0$ with probability 1,  if $k\notin A$;

if $k\in A$, then

\bigskip

\begin{tabular}{|c|c|c|c|c|c|}
\hline
$\xi_k$ &
0 &
$\left[\sqrt{n_k}\right]$ &
$2\cdot\left[\sqrt{n_k}\right]$ &
\dots &
$(\left[\sqrt{n_k}\right]-1)\cdot \left[\sqrt{n_k}\right]$ \\
\hline
 &
$\frac{1}{\left[\sqrt{n_k}\right]}$ &
$\frac{1}{\left[\sqrt{n_k}\right]}$ &
$\frac{1}{\left[\sqrt{n_k}\right]}$ &
\dots &
$\frac{1}{\left[\sqrt{n_k}\right]}$ \\
\hline
\end{tabular}.

\bigskip
Let $\lambda$ be Lebesgue measure on $[0;1]$ and $\Delta_k(x)$ be cylinder of $k$-th  rank, where $x \in \Delta_k(x)$. Then $\forall x\in T^*$:
$$
\begin{aligned}
& \ln \mu_\xi (\Delta_{k_s}(x))=-\ln Q_s; \\
& \ln \lambda(\Delta_{k_s}(x))=-\ln (n_1 n_2 \dots n_{k_s}); \\
& \lim_{s\to\infty} \frac{\ln \mu_\xi (\Delta_{k_s}(x))}{\ln \lambda(\Delta_{k_s}(x))}
=\lim_{s\to\infty} \frac
{\ln\left[\sqrt{n_{k_1}}\right]+\ln\left[\sqrt{n_{k_2}}\right]+\dots\ln\left[\sqrt{n_{k_{s-1}}}\right]+\ln\left[\sqrt{n_k}\right]}
{\ln (n_1 n_2 \dots n_{k_{s-1}})+\ln n_{k_s}}=\frac{C}{2C+2}.
\end{aligned}
$$

Let us prove that
$$
\limsup_{l\to\infty} \frac{\ln \mu_\xi (\Delta_l(x))}{\ln \lambda(\Delta_l(x))}=\frac{C}{2C+2}, \forall x\in T^*.
$$
For an arbitrary $l\in \mathbb{N}$ there exists  a number $s=s(l)$ such that $k_s\leqslant l<k_{s+1}$. Fix $d_l:=l-k_s$. Then $l=k_s+d_l$, where $d_l<k_{s+1}-k_s$. Then
$$
\begin{aligned}
\frac{\ln \mu_\xi (\Delta_l(x))}{\ln \lambda(\Delta_l(x))}&=
\frac
{\ln \left(\mu_\xi (\Delta_{k_s}(x))\cdot \overbrace{1\cdot 1 \cdot \dots \cdot 1}^{d_l}\right)}
{\ln \left(\lambda (\Delta_{k_s}(x))\cdot \frac{1}{n_{k_s+1}}\cdot \frac{1}{n_{k_s+2}}\cdot \dots \cdot\frac{1}{n_{k_s+d_l}}\right)}
=\\
&=\frac{\ln\left( \frac{1}{\mu_\xi (\Delta_{k_s}(x))}\right)+0}
{\ln\left(\frac{1}{\lambda(\Delta_{k_s}(x))}\right)+\sum_{i=1}^{d_l} \ln n_{k_s+i}}\leqslant\\
&\leqslant\frac{\ln\left( \frac{1}{\mu_\xi (\Delta_{k_s}(x))}\right)}{\ln\left(\frac{1}{\lambda(\Delta_{k_s}(x))}\right)}=
\frac{\ln \mu_\xi (\Delta_{k_s}(x))}{\ln \lambda(\Delta_{k_s}(x))}.
\end{aligned}
$$
Consequently, $\gamma_i:=\frac{\ln \mu_\xi (\Delta_{k_s+i}(x))}{\ln \lambda(\Delta_{k_s+i}(x))}$ is a monotonically decreasing for $i\in\{0,1,2,\dots k_{s+1}-k_s-1\}$. So,
$$
\limsup_{l\to\infty} \frac{\ln \mu_\xi (\Delta_l(x))}{\ln \lambda(\Delta_l(x))}=
\limsup_{s\to\infty} \frac{\ln \mu_\xi (\Delta_{k_s}(x))}{\ln \lambda(\Delta_{k_s}(x))}=\frac{C}{2C+2}.
$$
This shows that
$$
T^*\subset \left\{
x: \limsup_{l\to\infty} \frac{\ln \mu_\xi (\Delta_l(x))}{\ln \lambda(\Delta_l(x))}\leqslant \frac{C}{2C+2}
\right\}.
$$
By Theorem \ref{billingsley_theorem_for_dimp}, we get
$$
\dim_P(T^*,\Phi,\lambda)\leqslant \frac{C}{2C+2}\cdot \dim_P(T^*,\Phi,\mu_\xi).
$$
Since $T^*$ is the topological support of the measure $\mu_\xi$, we conclude that $\dim_P(T^*,\Phi,\mu)=1$. Consequently, $\dim_P(T^*,\Phi,\lambda)=\dim_P(T^*,\Phi)$. This shows that
\begin{equation}
\label{eq:dimp_Phi_of_T_star}
\dim_P(T^*,\Phi)\leqslant \frac{C}{2C+2}.
\end{equation}

From inequalities \eqref{eq:dimp_of_T_star} and \eqref{eq:dimp_Phi_of_T_star}, it follows that
$$
\dim_P(T^*,\Phi) \leqslant \frac{C}{2C+2} < \frac{C}{C+2} \leqslant \dim_P T^*,
$$
which completes the proof.
\end{proof}

\bigskip
\textbf{Acknowledgment.}
This work was partly supported by  SFB-701 <<Spectral Structures and Topological Methods in Mathematics>> (Bielefeld University), STREVCOM FP-7-IRSES 612669 project and by the Alexander von Humboldt Foundation.

\bigskip

\end{document}